\theoremstyle{plain}
\newtheorem{theorem}{Theorem}[]
\newtheorem{lemma}[theorem]{Lemma}
\theoremstyle{definition}
\theoremstyle{remark}
\numberwithin{equation}{section}
\email{}
\email{shafiq\_ur\_rahman2@yahoo.com, shafiq@ciit-attock.edu.pk}
\begin{document}
\title{{A note on a characterization theorem for a certain class of domains}}    
\author{Shafiq ur Rehman}

\address{(Rehman)  COMSATS Institute of Information Technology, Attock, Pakistan and \indent ASSMS, GC University Lahore, Pakistan.}

\thanks{2010 Mathematics Subject Classification: Primary 13A15, Secondary 13F05.}
\thanks{Key words and phrases: Invertible ideal, Pr\"{u}fer domain, h-local domain.}

\begin{abstract}
\noindent
We have introduced and studied in \cite{DumII} the class of {\em Globalized multiplicatively pinched-Dedekind domains $($GMPD domains$)$}. This class of  domains could be characterized by a certain  factorization property of the non-invertible ideals, (see \cite[Theorem 4]{DumII}). In this note a simplification of the characterization theorem  \cite[Theorem 4]{DumII}  is provided in more general form.
\end{abstract}

\maketitle

Let $D$ be an integral domain. By an {\em MNI ideal} of $D$ we mean
an ideal of $D$ which is  maximal among the nonzero noninvertible ideals of $D$. By \cite[Exercise 36, page 44]{Kap}, every MNI ideal is a prime ideal. Moreover, using standard  Zorn's Lemma arguments, one can show that every nonzero non-invertible ideal is contained in some MNI ideal. $D$ is said to be {\em h-local} provided every nonzero ideal of $D$ is contained in at most finitely many maximal ideals of $D$ and each nonzero prime ideal of $D$ is contained in a unique maximal ideal of $D$.
$D$ is called a {\em pseudo-valuation domain (PVD)} if $D$ is quasi-local with maximal ideal $M$ and $M:M$ is a valuation domain with maximal ideal $M$, cf.  \cite{HH} and \cite[Proposition 2.5]{AD}. A {\em two-generated} domain is a domain whose ideals are two-generated.
Let $D$ be a quasi-local domain with maximal ideal $M$. By \cite[Theorems 2.7 and 3.5]{HH}, $D$ is a two-generated PVD  if and only if   $D$ is a field,   a DVR, or   a Noetherian domain such that its integral closure $D'$ is a DVR with maximal ideal $M$ and  $D'/M$ is a quadratic field extension of $D/M$.\\

In \cite{DumII}, we introduced and study the class of  {\em Globalized multiplicatively pinched-Dedekind domains $($GMPD domains$)$}. A domain $D$ is called a {\em globalized multiplicatively pinched-Dedekind domain  $($GMPD domain$)$} if $D$ is h-local and for each maximal ideal $M$, $D_M$ is  a  two-generated PVD, or  a valuation domain with value group    $\mathbb{Z}\times \mathbb{Z}$ or $\mathbb{R}$, cf. \cite[Definition 2]{DumII}. A Dedekind domain is a GMPD domain and the integrally closed Noetherian GMPD domain are exactly the Dedekind domains. This class of domains could  be characterized by a certain  factorization property of the non-invertible ideals. An h-local domain $D$ is a GMPD domain if and only if
every two MNI ideals are comaximal and every  nonzero non-invertible ideal $I$ of $D$ can be written as $I=JP_1\cdots P_k$ for some invertible ideal $J$ and  distinct MNI ideals $P_1,...,P_k$  uniquely determined by $I$ (\cite[Theorem 4, Remark 5]{DumII}). In this note a more general simplification of this characterization theorem in provided (Theorem \ref{3}).\\

Throughout this note all rings are (commutative unitary) integral domains. For a domain $D$, $D'$ (resp. $\bar{D}$)  denotes the integral closure (resp. complete integral closure) of $D$.  Any unexplained material is standard like in \cite{G} or \cite{Kap}.

\begin{lemma}\label{1}
Every two distinct MNI ideals of a domain $D$ are comaximal.
\end{lemma}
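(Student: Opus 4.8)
The plan is to argue by contradiction, and the key idea is to replace a direct comparison of the two ideals by a short computation with fractional ideals. Suppose $P_1$ and $P_2$ are distinct MNI ideals with $I := P_1 + P_2 \neq D$. Since $I \supseteq P_1$ and $P_1 \neq 0$, the ideal $I$ is a nonzero proper ideal of $D$.

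First I would rule out that $I$ is noninvertible. In that case $I$ would be a nonzero noninvertible ideal containing $P_1$, and since $P_1$ is maximal among nonzero noninvertible ideals this forces $I = P_1$, hence $P_2 \subseteq P_1$. But then the MNI ideal $P_2$ sits inside the nonzero noninvertible ideal $P_1$, so maximality of $P_2$ gives $P_1 = P_2$, contradicting distinctness. Hence $I$ is invertible.

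Next comes the main step. From $P_1 \subseteq I$ and $I$ invertible, the fractional ideal $J := P_1 I^{-1}$ satisfies $J \subseteq I I^{-1} = D$, so $J$ is an integral ideal; it is nonzero, and since $I \subseteq D$ gives $D \subseteq I^{-1}$, we have $J = P_1 I^{-1} \supseteq P_1$. Moreover $J$ is noninvertible, for if $J$ were invertible then $P_1 = J I$ would be a product of invertible ideals, hence invertible, contradicting that $P_1$ is an MNI ideal. Therefore $J$ is a nonzero noninvertible ideal containing $P_1$, and maximality of $P_1$ yields $P_1 I^{-1} = P_1$. Running the identical argument with $P_2$ in place of $P_1$ gives $P_2 I^{-1} = P_2$.

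Finally I would conclude using distributivity of multiplication over sums of fractional ideals:
\[
D \;=\; I\,I^{-1} \;=\; (P_1 + P_2)\,I^{-1} \;=\; P_1 I^{-1} + P_2 I^{-1} \;=\; P_1 + P_2 \;=\; I,
\]
contradicting that $I$ is proper. I do not expect a serious obstacle; the only point requiring a little care is that an MNI ideal need not be finitely generated, so the argument should be phrased purely through inclusions of fractional ideals together with the maximality property, rather than through a cancellation law of the form $P_1 I = P_1 \Rightarrow I = D$. (One could alternatively localize at a maximal ideal containing $I$ and use that invertible ideals are locally principal, but the fractional-ideal computation above seems cleanest.)
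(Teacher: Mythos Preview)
Your proof is correct and noticeably cleaner than the one in the paper. Both arguments start the same way: assume $P_1,P_2$ lie in a common proper ideal, observe that this container must be invertible (otherwise maximality forces $P_1=P_2$), and then exploit the identity $P_iJ^{-1}=P_i$ for an invertible $J\supseteq P_i$. The paper, however, takes $J=M$ a maximal ideal, iterates to get $P_i\subseteq\bigcap_n M^n=:Q$, proves directly that $Q$ is prime, argues $Q$ is invertible, and finally uses that two comparable invertible primes must coincide to obtain $M=M^2$. You instead take $J=I=P_1+P_2$, and the single line
\[
D=II^{-1}=(P_1+P_2)I^{-1}=P_1I^{-1}+P_2I^{-1}=P_1+P_2=I
\]
finishes at once. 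The advantage of your choice is precisely this distributive step, which is available because $I$ is \emph{built} from $P_1$ and $P_2$; the paper's route through $\bigcap_n M^n$ is longer and needs the extra fact about comparable invertible primes. Your closing remark about avoiding a cancellation law (since $P_i$ need not be finitely generated) is well taken and exactly the right level of care.
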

\begin{proof}
 Deny. Let $P_1 \neq P_2$ be the MNI ideals of $D$ both contained in the maximal ideal $M$. Then $M$ is invertible and so $P_i \subsetneq M$ implies that $P_i \subsetneq \cap_{n\geq 1} M^n=Q$. The ideal $Q$ is invertible and prime. Indeed, if $ab \in Q$ with both $a,b \not \in Q$, then there exist integers $k, l$ and the ideals $U,V$  such that $(a)=M^kU$ with $U \nsubseteq M$ and $(b)=M^lV$ with $V \nsubseteq M$. Since $ab \in M^{k+l+1}$, so $(ab)=M^{k+l+1}N$ for some ideal $N$. Combining, we get that $M^{k+l}UV=M^{k+l+1}N$. This implies that $UV=MN$ which is not possible because $U,V$ are not contained in $M$. Hence $Q$ is prime. As $P_i \subsetneq Q$, so $Q$ is invertible. Since any two invertible prime ideals are not comparable, so $Q=M$. This implies that $M=M^2$ and hence $M=D$, a contradiction.
\end{proof}

Recall \cite[Section 5.1]{FHL} that a domain  $D$ has pseudo-Dedekind factorization if for
each nonzero non-invertible ideal $I$, there is an invertible ideal $B$ (which might
be $D$) and finitely many pairwise comaximal primes $P_1,P_2,...,P_n$ such that
$I=BP_1P_2 \cdots P_n$ (the requirement that $n > 0$ "comes for free").

\begin{theorem}\label{2}
Let $D$ be a domain such that every nonzero non-invertible ideal $I$ of $D$ can be written as $I=JP_1\cdots P_k$
for some invertible ideal $J$ and  distinct MNI ideals $P_1,...,P_k$. Then $D$ is h-local.
\end{theorem}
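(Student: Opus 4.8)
The plan is to verify the two defining conditions of h-locality separately: (a) every nonzero ideal lies in only finitely many maximal ideals, and (b) every nonzero prime lies in a unique maximal ideal. Throughout I will use the factorization hypothesis together with Lemma~\ref{1}, which tells us the MNI ideals are pairwise comaximal.

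\emph{Finitely many maximal ideals.} Let $I$ be a nonzero ideal of $D$. If $I$ is invertible, then $I$ is contained in no MNI ideal at all; but any proper ideal is contained in some maximal ideal, and a maximal ideal that is not invertible is itself an MNI ideal, while an invertible maximal ideal $M$ would have to contain $I$ with $I$ invertible. So I first want to argue that an invertible ideal $I \ne D$ lies in only finitely many maximal ideals. Here the cleanest route is: the maximal ideals containing $I$ that are non-invertible are exactly certain of the MNI ideals $P$ with $I \subseteq P$; writing $I = JP_1\cdots P_k$ shows (by comaximality of the $P_i$, via CRT) that the only such $P$ are among $P_1,\dots,P_k$, hence finitely many. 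For invertible maximal ideals $M \supseteq I$: if $M$ is invertible then localizing, $I_M$ is a proper invertible ideal of the local ring $D_M$, so $I_M = (M_M)^{n}$ for some $n \ge 1$ (invertible ideals in a local domain are principal, and here they are powers of the maximal ideal because $M$ invertible forces $D_M$ to be a DVR-like situation — more precisely $\bigcap M^n$ is prime by the argument in Lemma~\ref{1}); then $M$ is one of finitely many maximal ideals appearing in the primary-type decomposition forced by $I = JP_1\cdots P_k$. In fact the slick statement is: if $I = JP_1 \cdots P_k$ with $J$ invertible, then any maximal ideal containing $I$ either contains one of the $P_i$ (forcing it to equal that $P_i$, as $P_i$ is an MNI prime contained in a \emph{unique} maximal ideal once we know it is prime and... ) or contains $J$. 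This reduces everything to: an invertible ideal is contained in finitely many maximal ideals, which I would prove by showing an invertible ideal $J$ lies in an MNI ideal or is the whole ring — and then inducting.

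Let me restructure: the real engine is the following claim, proved by induction on $k$. \emph{Claim: if $I = JP_1\cdots P_k$ with $J$ invertible and $P_1,\dots,P_k$ distinct MNI ideals, then the maximal ideals containing $I$ are exactly: the $P_i$ (each being an MNI ideal, hence contained in a unique maximal ideal, which I must show equals $P_i$ itself or lies above it), together with the finitely many maximal ideals containing $J$.} To handle $J$: apply the hypothesis to $J$ if $J$ is non-invertible — but $J$ is invertible by assumption, so instead I use that a proper invertible ideal $J$ is contained in some maximal ideal $M$, and if $M$ is non-invertible it is an MNI ideal; if $M$ is invertible, then in $D_M$ the ideal $J_M$ is a proper principal ideal and $M_M$ is principal, and one shows $J_M = M_M^n$, so that $J M^{-n}$ is an invertible ideal not contained in $M$ — peeling off $M$. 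Iterating, and using that $\bigcap_n M^n$ is prime (Lemma~\ref{1}'s argument) hence an MNI-or-smaller prime, the number of invertible maximal ideals over $J$ is controlled. I expect \textbf{this peeling-off argument for invertible maximal ideals to be the main obstacle}, since it requires knowing that invertible ideals in a quasi-local domain are principal and understanding the chain $M \supsetneq M^2 \supsetneq \cdots$; one must ensure the process terminates.

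\emph{Unique maximal ideal over a nonzero prime.} Let $P$ be a nonzero prime. If $P$ is invertible, handle it by the peeling argument above. If $P$ is non-invertible, it is contained in some MNI ideal $P_0$, which is prime. It suffices to show every non-invertible prime is contained in a unique MNI ideal and that each MNI ideal lies under a unique maximal ideal. For the latter: an MNI ideal $P_0$ is maximal among nonzero noninvertible ideals; any maximal ideal $M \supseteq P_0$ is either $= P_0$ (done) or invertible; in the invertible case, $P_0 \subsetneq \bigcap_n M^n =: Q$, which is prime and, being strictly larger than an MNI-contained... wait, $Q$ is invertible-or-not: if $Q$ is non-invertible then $Q = P_0$ by maximality, contradicting $P_0 \subsetneq Q$; so $Q$ is invertible and prime, forcing $Q = M$ (invertible primes are pairwise incomparable), giving $M = M^2$, impossible. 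Hence $M = P_0$, so $P_0$ is maximal and the unique maximal ideal over itself and over every prime it contains. Finally, if a non-invertible prime $P$ lay in two distinct MNI ideals $P_0 \ne P_1$, these would be two non-invertible primes with $P \subseteq P_0 \cap P_1$, hence not comaximal, contradicting Lemma~\ref{1}. This completes the verification that $D$ is h-local.
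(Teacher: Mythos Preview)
Your attempt is a direct verification of h-locality, whereas the paper's proof is a two-line reduction: since MNI ideals are prime and (by Lemma~\ref{1}) pairwise comaximal, the hypothesis is exactly \emph{pseudo-Dedekind factorization} in the sense of \cite[Section~5.1]{FHL}, and then \cite[Corollary~5.2.14]{FHL} gives h-locality immediately. So you are in effect trying to reprove that corollary from scratch.

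Your argument contains a genuine error, not just the acknowledged gap in the ``peeling'' step. In part~(b) you claim that every MNI ideal $P_0$ is maximal, arguing that if $P_0\subsetneq M$ with $M$ invertible then $P_0\subsetneq Q:=\bigcap_n M^n$, and then deriving a contradiction. But only $P_0\subseteq Q$ follows from $MP_0=P_0$; the strict inclusion is unjustified, and in fact equality can occur. Concretely, take $D$ a valuation domain with value group $\mathbb{Z}\times\mathbb{Z}$ (lexicographically ordered) --- one of the very local models in the definition of a GMPD domain. Here the maximal ideal $M$ is principal (hence invertible), while the height-one prime $P=\bigcap_n M^n$ is non-principal and is the \emph{unique} MNI ideal; thus $P_0=P=Q\subsetneq M$ and your contradiction evaporates. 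The reason the analogous step works in Lemma~\ref{1} is that there one has \emph{two} distinct MNI ideals $P_1,P_2\subseteq Q$, which forces $Q$ to be invertible (otherwise $P_1=Q=P_2$), hence $P_i\subsetneq Q$; with a single $P_0$ this leverage is gone. Since ``MNI ideals are maximal'' is false, your route to uniqueness of the maximal ideal over a non-invertible prime collapses, and the invertible case was already deferred to the incomplete peeling argument. The finite-character part~(a) has the same unresolved issue: you never establish that a nonzero invertible ideal lies in only finitely many maximal ideals, and your peeling process is not shown to terminate.
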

\begin{proof}
By \cite[Exercise 36, page 44]{Kap}, every MNI ideal is a prime ideal and by Lemma \ref{1}, $P_1,...,P_k$ are pairwise  comaximal. Hence $D$ has pseudo-Dedekind factorization, cf. \cite[Section 5.1]{FHL}. Now Apply \cite[Corollary 5.2.14]{FHL}.
\end{proof}

\begin{theorem}\label{3}
A domain $D$ is a GMPD domain if and only if
every  nonzero non-invertible ideal $I$ of $D$ can be written as $I=JP_1\cdots P_k$
for some invertible ideal $J$ and  distinct MNI ideals $P_1,...,P_k$.
\end{theorem}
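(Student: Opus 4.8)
The plan is to obtain both implications directly from the h-local characterization \cite[Theorem 4]{DumII}, using \lemref{1} and \thmref{2} to supply the conditions that occur there as hypotheses but are absent from the statement above, namely comaximality of the MNI ideals and h-locality.

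For the forward implication there is almost nothing to do. If $D$ is a GMPD domain, then $D$ is h-local by definition and, by \lemref{1}, any two distinct MNI ideals of $D$ are comaximal; hence \cite[Theorem 4]{DumII} applies and yields, for every nonzero non-invertible ideal $I$, a factorization $I=JP_1\cdots P_k$ with $J$ invertible and $P_1,\dots,P_k$ distinct MNI ideals --- which is precisely the asserted property. (The stronger conclusions of \cite[Theorem 4, Remark 5]{DumII}, that the $P_i$ are pairwise comaximal and uniquely determined by $I$, are simply not needed for the statement above.)

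For the converse, assume that every nonzero non-invertible ideal of $D$ factors as $I=JP_1\cdots P_k$ with $J$ invertible and $P_1,\dots,P_k$ distinct MNI ideals. Then \thmref{2} shows that $D$ is h-local, and \lemref{1} shows that any two distinct MNI ideals of $D$ are comaximal. Thus $D$ meets the hypotheses of the ``if'' half of \cite[Theorem 4]{DumII} --- h-locality, comaximality of the MNI ideals, and existence of the stated factorization --- and that theorem gives that $D$ is a GMPD domain (whereupon, by \cite[Remark 5]{DumII}, the factorization is in fact unique). This settles both implications.

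Consequently there is no obstacle internal to the proof of \thmref{3}: once \lemref{1} and \thmref{2} are available, the whole argument is the short reduction above. The real work --- and where I would expect any difficulty to lie --- is in \thmref{2}, i.e.\ in deducing h-locality from the factorization property, which is itself handled by appealing to the theory of pseudo-Dedekind factorization in \cite{FHL} (via \cite[Corollary 5.2.14]{FHL}). It is also worth recording that, by \lemref{1}, comaximality of the MNI ideals holds in \emph{every} domain, so the only substantive relaxation of \cite[Theorem 4]{DumII} effected here is the removal of the a priori assumption of h-locality.
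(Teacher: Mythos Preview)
Your argument is correct and is exactly the paper's approach: the paper's proof is the one-line ``Apply \thmref{2} and \cite[Theorem 4]{DumII},'' and you have simply unpacked this by making explicit how \thmref{2} supplies h-locality and \lemref{1} supplies comaximality so that \cite[Theorem 4]{DumII} applies in both directions.
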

\begin{proof}
Apply Theorem \ref{2} and  \cite[Theorem 4]{DumII}.
\end{proof}


\begin{thebibliography}{99999}



\bibitem{AD} { D.F. Anderson and D.E. Dobbs,}
Pairs of rings with the same prime ideals,
Canad. J. Math. {\bf 32} (1980), 362-384.





\bibitem{Dum} {T. Dumitrescu and S. U. Rahman,}
{A class of pinched domains,}  Bull. Math. Soc. Sci. Math. Roumanie  {\bf 52} (2009), 41-55.


\bibitem{DumII} {T. Dumitrescu and S. U. Rahman,}
{A class of pinched domains II,} Comm. Alg. {\bf 39} (2011), 1394-1403.

\bibitem{FHL}{ M. Fontana, E. Houston, T. Lucas,}
{\em Factoring Ideals in Integarl domains,} Lecture Notes of the Unione Mathematica Italiana, Vol. 14,  Springer-Verlag Berlin Heidelberg, 2013.





\bibitem{G} R. Gilmer, {\em Multiplicative Ideal Theory}, Marcel Dekker, New York, 1972.





\bibitem{HH} { J.R. Hedstrom and E.G. Houston,}
{Pseudo-valuation domains,}
Pacific J. Math. {\bf 75} (1978), 137-147.






\bibitem{Kap} {I. Kaplansky,}
{\em Commutative Rings}, rev. ed. The University of Chicago Press,
Chicago and London, 1974.










\end{thebibliography}
\end{document}